\title{Probabilistic Solvers for Partial Differential Equations}
\author{
    Ilias Bilionis\thanks{\texttt{www.predictivesciencelab.org}} \\
School of Mechanical Engineering\\
Purdue University\\
585 Purdue Mall\\
West Lafayette, IN 47907\\
\texttt{ibilion@purdue.edu}
}
\newcommand{\bx}{\mathbf{x}}
\newcommand{\bC}{\mathbf{C}}
\newcommand{\bX}{\mathbf{X}}
\newcommand{\R}{\mathbb{R}}
\newcommand{\calL}{\mathcal{L}}
\newcommand{\calB}{\mathcal{B}}
\newcommand{\balpha}{\boldsymbol{\alpha}}
\newcommand{\bbeta}{\boldsymbol{\beta}}
\newcommand{\qref}[1]{Eq.~(\ref{eqn:#1})}
\newcommand{\sref}[1]{Sec.~\ref{sec:#1}}
\newcommand{\GP}{\operatorname{GP}}
\newcommand{\E}{\operatorname{E}}
\newcommand{\Cov}{\operatorname{Cov}}
\newtheorem{theorem}{Theorem}
\begin{document}

\maketitle

\begin{abstract}
	This work is concerned with the quantification of the epistemic uncertainties induced the discretization of partial differential equations.
	Following the paradigm of probabilistic numerics, we quantify this uncertainty probabilistically.
	Namely, we develop a  probabilistic solver suitable for linear partial differential equations (PDE) with mixed (Dirichlet and Neumann) boundary conditions defined on arbitrary geometries.
	The idea is to assign a probability measure on the space of solutions of the PDE and then condition this measure by enforcing that the PDE and the boundary conditions are satisfied at a finite set of spatial locations.
	The resulting posterior probability measure quantifies our state of knowledge about the solution of the problem given this finite discretization.
\end{abstract}

\section{Introduction}

We propose a novel discretization technique for partial differential equations (PDEs) that is inherently probabilistic and thus capable of quantifying the epistemic uncertainties induced by the discretization. 
Inspired by Diaconis \cite{Diaconis:1988ip}, we 1) Assign a prior probability measure on the space of solutions; 2) Observe the PDE on a discretized geometry; and 3) Use Bayes rule to derive the probability measure on the space of solutions that is compatible with both our prior beliefs and the observations. 
The uncertainty in this posterior probability measure is exactly the discretization uncertainty we are looking for. 

Our work fits within the program of probabilistic numerics (PN) \cite{Hennig:2015jf}, a term referring to the probabilistic interpretation of numerical algorithms.
The ideas of PN can be found in various fields, e.g., the quantification epistemic uncertainty in quadrature rules \cite{ohagan1991,kennedy1998,minka2000}, 
uncertainty propagation \cite{haylock1996,oakley2002}, 
sensitivity indexes \cite{becker2012,daneshkhah2013}, linear algebra \cite{hennig2015b}, ordinary differential equations \cite{skilling1992,graepel2003,calderhead2009,chkrebtii2013,barber2014,hennig2014,schober2014,conrad2015}, and optimization \cite{hennig2013}. 

The outline of the paper is as follows.
In \sref{related} we discuss how our work relates to existing methods for solving PDEs focusing mostly on PN-derived ones.
In \sref{metho} we present our solution methodology for arbitrary linear PDEs with mixed linear boundary conditions on arbitrary geometries.
In \sref{results} we employ our method to solve 1D and 2D boundary value problems with quantified discretization errors.
Finally, in \sref{conclusions} we present our conclusions.

\section{Related Work}
\label{sec:related}

There have been a few attempts  to quantify the numerical errors in PDEs, albeit they either ignore spatial discretization errors \cite{chkrebtii2013,conrad2015}, or they only work in square domains \cite{graepel2003}. 
The seminal work of \cite{Owhadi:2015vo} uses probabilistic arguments to derive a fast multigrid algorithm for the solution of elliptic PDEs, but it does not attempt to quantify epistemic uncertainties induced by discretization errors. 
Similarly to \cite{graepel2003}, the method we propose bears some resemblance to classical collocation methods \cite{ames1977}, but contrary to them it does not require the analytic specification of a set of basis functions.

\section{Methodology}
\label{sec:metho}

Consider the time-indpendent partial differential equation (PDE):
\begin{equation}
\label{eqn:pde}
\mathcal{L}[u](\bx) = f(\bx),\;\bx\in\Omega,
\end{equation}
with boundary conditions:
\begin{equation}
\label{eqn:bndry}
\mathcal{B}[u](\bx) = g(\bx),\;\bx\in\partial\Omega,
\end{equation}
where $\Omega\subset\R^d$ is a physical object in $d$ dimensions
($d=1,2$, or $3$), with a sufficiently smooth boundary
$\partial\Omega$, and $f$ and $g$ are the source and and the boundary
terms of the PDE.
$\calL$ and $\calB$ are linear differential operators acting on functions $u$,
with $\calB$ being at least one order less than $\calL$.
We say that \qref{pde} and \qref{bndry} constitute a boundary value problem
(BVP).

\paragraph{Multi-index notation for defining the differential operators.}
To make these concepts mathematically precise without having an overwhelming
notation, we will have to use the multi-index notation.
A multi-index is a $d$ tuple
\begin{equation}
\label{eqn:alpha}
\balpha = (\alpha_1,\dots,\alpha_d),
\end{equation}
of non-negative integers. We define the absolute value of a multi-index
$\balpha$ to be:
\begin{equation}
|\balpha| = \alpha_1 + \dots + \alpha_d,
\label{eqn:abs_alpha}
\end{equation}
and the $\balpha$-partial derivative operator:
\begin{equation}
\label{eqn:partial_alpha}
\partial^{\balpha} = \partial_1^{\alpha_i}\dots\partial_d^{\alpha_d},
\end{equation}
where
\begin{equation}
\label{eqn:partial_i_alpha_i}
\partial_r^{\alpha_r} = \frac{\partial^{\alpha_r}}{\partial x_r^{\alpha_r}},
\end{equation}
$\partial_i^0$ being the identity operator, $\partial_i^0u = u$.
Our assumption is that that $\calL$ is a $k$-th linear differential
operator, i.e.,
\begin{equation}
\calL[u] = \sum_{|\balpha|\le k}\calL_{\balpha}\partial^{\balpha} u,
\end{equation}
where the coefficients $\calL_{\balpha}$ are functions of $\bx$.
Similarly, $\calB$ is a $(k-1)$-th order linear differential operator,
\begin{equation}
\calB[u] = \sum_{|\balpha|\le k - 1}\calB_{\balpha}\partial^{\balpha}u,
\end{equation}
where the $\calB_{\balpha}$ are also functions of $\bx$.

\paragraph{Prior probability measure on the space of solutions.}
Prior to making any observations, we express our uncertainty about the 
solution $u$ of the BVP by assigning a Gaussian random field to it.
Namely, we assume that $u$ is a zero mean Gaussian process (GP) with
covariance function $c$, i.e.
\begin{equation}
\label{eqn:prior_u}
u\sim \GP\left(u \middle|0, c\right).
\end{equation}
Intuitively, we are assuming that, a priori, any sample from \qref{prior_u}
could be the solution to the BVP.
The selection of $c$ is a statement about the assumed smoothness of the
solution, its lengthscale of variation, etc.

We can now state and prove the following important result:
\begin{theorem}
	\label{thm:differential_u}
	Let $\calL'$ and $\calB'$ be identical to $\calL$ and $\calB$,
	respectively, but acting on $\bx'$ instead of $\bx$.
	If $\calL\calL'[c](\bx,\bx)$ and $\calB\calB'[c](\bx,\bx)$
	exist for all $(\bx,\bx)\in\R^{2d}$, then both $\calL[u](\bx)$  and
	$\calB[u](\bx)$ exist in the mean square sense for all $\bx\in\R^d$.
	Furthermore, the vector random process $(u, \calL[u], \calB[u])$ is Gaussian
	with zero mean and covariance:
	\begin{equation}
	\begin{array}{ccc}
	\Cov[u(\bx), u(\bx')] &=& c(\bx, \bx'),\\
	\Cov\left[\calL[u](\bx), u(\bx')\right] &=& \calL[c](\bx,\bx'),\\
	\Cov\left[\calB[u](\bx), u(\bx')\right] &=& \calB[c](\bx,\bx'),\\
	\Cov\left[\calL[u](\bx), \calL[u](\bx')\right] &=& \calL\calL'[c](\bx,\bx'),\\
	\Cov\left[\calL[u](\bx), \calB[u](\bx')\right] &=& \calL\calB'[c](\bx,\bx'),\\
	\Cov\left[\calB[u](\bx), \calB[u](\bx')\right] &=& \calB\calB'[c](\bx,\bx').
	\end{array}
	\end{equation}
	where $\calL'$ and $\calB'$ are identical to $\calL$ and $\calB'$,
	respectively, but they operate on functions of $\bx'$.
\end{theorem}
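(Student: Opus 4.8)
The plan is to reduce the statement to the classical theory of mean-square (m.s.) differentiation of second-order stochastic processes, applied one partial derivative at a time, and then to build up $\calL[u]$ and $\calB[u]$ by linearity. The workhorse is the Lo\`eve criterion: a family $\{v_h\}$ of $L^2$ random variables converges in m.s.\ as $h\to 0$ if and only if $\E[v_h v_{h'}]$ tends to a finite limit as $(h,h')\to(0,0)$, and when it does the $L^2$ inner product passes to the limit, so covariances against the limit may be computed termwise.

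First I would handle a single partial derivative. Fix $\bx$, a coordinate direction $r$, let $e_r$ be the $r$-th standard unit vector, and consider the difference quotients $u_h(\bx)=h^{-1}\big(u(\bx+h e_r)-u(\bx)\big)$. Then $\E[u_h(\bx)\,u_{h'}(\bx)]$ is a mixed second difference quotient of $c$ with one increment in each argument, and by the hypothesis that the relevant derivatives of $c$ exist on the diagonal it converges to $\partial_{x_r}\partial_{x'_r}c(\bx,\bx)$. The Lo\`eve criterion then produces the m.s.\ limit $\partial_r u(\bx)$, and continuity of the inner product gives $\Cov[\partial_r u(\bx),u(\bx')]=\partial_{x_r}c(\bx,\bx')$. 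Iterating this step at most $k$ times---each time differentiating the already-differentiated field, whose required covariance derivatives are exactly those guaranteed to exist by the hypotheses on $\calL\calL'[c]$ and $\calB\calB'[c]$---shows that $\partial^{\balpha}u(\bx)$ exists in m.s.\ for every $\balpha$ with $|\balpha|\le k$, with $\Cov[\partial^{\balpha}u(\bx),\partial^{\bbeta}u(\bx')]=\partial^{\balpha}_{\bx}\partial^{\bbeta}_{\bx'}c(\bx,\bx')$ for $|\balpha|,|\bbeta|\le k$.

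Next I would assemble the operators. Because $\calL[u](\bx)=\sum_{|\balpha|\le k}\calL_{\balpha}(\bx)\,\partial^{\balpha}u(\bx)$ is a finite linear combination, with deterministic coefficients, of quantities existing in m.s., it exists in m.s., and similarly for $\calB[u](\bx)$. Bilinearity of $\Cov$ then yields all six identities; for instance $\Cov[\calL[u](\bx),\calB[u](\bx')]=\sum_{|\balpha|\le k,\,|\bbeta|\le k-1}\calL_{\balpha}(\bx)\calB_{\bbeta}(\bx')\,\Cov[\partial^{\balpha}u(\bx),\partial^{\bbeta}u(\bx')]=\calL\calB'[c](\bx,\bx')$, and the remaining ones follow the same pattern, with the mixed entries $\calL[c]$, $\calB[c]$ arising when the operator acts on only one argument. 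For joint Gaussianity and zero mean, note that any finite linear combination of the scalars $u(\bx_i)$, $\calL[u](\bx_j)$, $\calB[u](\bx_l)$ is an m.s.\ limit of finite linear combinations of values of $u$ itself (through the difference quotients above), hence a limit in distribution of Gaussian random variables, hence Gaussian; the Cram\'er--Wold device then gives that $(u,\calL[u],\calB[u])$ is Gaussian, while $\E[\partial^{\balpha}u(\bx)]=\partial^{\balpha}\E[u(\bx)]=0$ by the same interchange of limit and expectation.

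The step I expect to be the main obstacle is the bookkeeping in the induction on $|\balpha|$: at each stage the Lo\`eve criterion requires not merely the top-order ``symmetric'' diagonal derivative $\partial^{\balpha}_{\bx}\partial^{\balpha}_{\bx'}c$ but also the intermediate mixed derivatives $\partial^{\balpha}_{\bx}\partial^{\bbeta}_{\bx'}c$ with $\bbeta\le\balpha$, so one must verify that the theorem's hypotheses really do supply these, together with enough continuity. I would isolate this in a preliminary lemma---using the standard fact that, for a positive-semidefinite $c$, existence of the highest symmetric derivative along the diagonal forces existence and joint continuity of the lower mixed derivatives---so that the remainder of the argument reduces to linearity of $\calL$, $\calB$ and continuity of the $L^2$ inner product.
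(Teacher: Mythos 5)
Your proposal is correct and takes essentially the same route as the paper's own proof: mean-square limits of difference quotients with the expectation passed through the limit to get $\Cov[\partial^{\balpha}u(\bx),\partial^{\bbeta}u(\bx')]=\partial^{\balpha}(\partial')^{\bbeta}c(\bx,\bx')$, then linearity of $\calL$ and $\calB$ for the six covariance identities, and Gaussianity of limits of linear combinations for the joint law. You simply make explicit the rigor the paper delegates to its cited Theorem 2.2.2 (Lo\`eve's criterion, the lower-order-derivative bookkeeping, Cram\'er--Wold in place of the appeal to Kolmogorov's extension theorem), which is a fair and accurate account of where the technical content lies.
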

\begin{proof}
	This is a straightforward generalization of Theorem 2.2.2 of \cite{doi:10.1137/1.9780898718980}.
	It can be proved as follows. First, notice that any multi-indices $\balpha$ and
	$\bbeta$, the vector process $(u, \partial^{\balpha}, \partial^{\bbeta})$
	is Gaussian with mean zero and covariance:
	\begin{equation}
	\Cov[\partial^{\balpha}u(\bx), \partial^{\bbeta}u(\bx')] = \partial^{\balpha}\left(\partial'\right)^{\bbeta}c(\bx,\bx').
	\end{equation}
	This can be proved by letting the expectation over the probability measure
	of $u$ get inside the limit in the definition of the partial derivatives.
	Combining this result with the fact that linear combinations of
	Gaussian random variables are Gaussian, one can show that any finite
	dimensional probability density function of the vector process is Gaussian.
	The result follows from a simple application of Kolmogorov's extension theorem.
	
	The covariances between the various quantities are straightforward to
	derive.
	For example:
	\begin{eqnarray*}
		\Cov\left[\calL[u](\bx),\calB[u](\bx')\right] &=& \E\left[\calL[u](\bx)\calB[u](\bx')\right],\\
		&=& \E\left[\left(\sum_{|\balpha|\le k}\calL_{\balpha}(\bx)\partial^{\balpha}u(\bx)\right)\left(\sum_{|\bbeta|\le k-1}\calB_{\bbeta}(\bx')\left(\partial'\right)^{\bbeta}u(\bx')\right)\right]\\
		&=&
		\sum_{|\balpha|\le k}\sum_{|\bbeta|\le k-1}\calL_{\balpha}(\bx)\calB_{\bbeta}(\bx')
		\E\left[\partial^{\balpha}u(\bx)\left(\partial'\right)^{\bbeta}u(\bx')\right]\\
		&=& 
		\sum_{|\balpha|\le k}\sum_{|\bbeta|\le k-1}\calL_{\balpha}(\bx)\calB_{\bbeta}(\bx')
		\partial^{\balpha}\left(\partial'\right)^{\bbeta}c(\bx,\bx')\\
		&=& \calL\calB'[c](\bx,\bx').
	\end{eqnarray*}
\end{proof}

\paragraph{Constraining the prior measure using the BVP.}
Following Diaconis' recipe, we use Bayes rule to constrain 
\qref{prior_u} to functions that satisfy the BVP. 
Of course, it is impossible to condition on an uncountable infinity of points.
Instead, consider a set of $n^i$ points in the interior of $\Omega$,
\begin{equation}
\label{eqn:xi}
\bX^i = \left\{\bx_{1}^i,\dots,\bx_{n^i}^i\right\}\subset\Omega,
\end{equation}
and a set of $n^b$ points on the boundary $\partial \Omega$,
\begin{equation}
\label{eqn:bndry}
\bX^b = \left\{\bx_1^b,\dots,\bx_{n^b}^b\right\}\subset\partial\Omega.
\end{equation}
We wish to derive the posterior probability measure of $u$ conditioned on
\begin{equation}
\label{eqn:obs_L}
\calL[u]\left(\bx^i_j\right) = f\left(\bx^i_j\right),\;j=1,\dots,n^i,
\end{equation}
and
\begin{equation}
\label{eqn:obs_B}
\calB[u]\left(\bx^b_j\right) = g\left(\bx^b_j\right),\;
j=1,\dots,n^b.
\end{equation}

\begin{theorem}
	\label{thm:u_conditioned}
	The posterior of the GP $u$ conditioned on \qref{obs_L} and
	\qref{obs_B} is a GP with posterior mean:
	\begin{equation}
	\label{eqn:posterior_mean}
	\tilde{m}(\bx) = \mathbf{c}(\bx)^T\bC^{-1}\mathbf{y},
	\end{equation}
	and posterior covariance:
	\begin{equation}
	\label{eqn:posterior_covariance}
	\tilde{c}(\bx, \bx') = c(\bx, \bx') - \mathbf{c}(\bx)^T\bC^{-1}\mathbf{c}(\bx'),
	\end{equation}
	with
	\begin{equation}
	\bC = \left(
	\begin{array}{cc}
	\calL\calL'c\left(\bX^i,\bX^i\right) & \calL\calB'c\left(\bX^i, \bX^b\right)\\
	\left(\calL\calB'c(\bX^i, \bX^b)\right)^T & \calB\calB'c\left(\bX^b,\bX^b\right) 
	\end{array}
	\right),
	\end{equation}
	\begin{equation}
	\mathbf{c}(\bx) =
	\left(\begin{array}{c}
	\calL c(\bx, \bX^i)\\
	\calB c(\bx, \bX^b)
	\end{array}\right),
	\end{equation}
	\begin{equation}
	\mathbf{y} = \left(
	\begin{array}{c}
	f\left(\bX^i\right)\\
	g\left(\bX^b\right)
	\end{array}
	\right),
	\end{equation}
	where for any function $h:\R^d\rightarrow R$,
	and sets of $n$ and $n'$ points $\bX=\{\bx_1,\dots,\bx_{n}\}$ and $\bX'=\{\bx_1',\dots,\bx_{n'}'\}$, respectively, we define
	$\mathbf{h}(\bX, \bX')$ to be the matrix with elements $(h(\bx_j, \bx_r'))$.
\end{theorem}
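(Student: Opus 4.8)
The plan is to reduce the statement to the standard Gaussian conditioning formula by first identifying the correct finite-dimensional joint distribution. By Theorem~\ref{thm:differential_u}, the process $(u, \calL[u], \calB[u])$ is jointly Gaussian with zero mean and the covariance structure listed there. In particular, for a fixed test point $\bx$, the finite collection of random variables
\begin{equation*}
\left(u(\bx),\; \calL[u](\bx^i_1),\dots,\calL[u](\bx^i_{n^i}),\; \calB[u](\bx^b_1),\dots,\calB[u](\bx^b_{n^b})\right)
\end{equation*}
is a jointly Gaussian random vector with zero mean. The first step is therefore to write down its covariance matrix explicitly: the variance of $u(\bx)$ is $c(\bx,\bx)$; the cross-covariance between $u(\bx)$ and the stacked observation vector is exactly $\mathbf{c}(\bx)$ as defined in the statement (using $\Cov[\calL[u](\bx'),u(\bx)]=\calL c(\bx',\bx)$ and symmetry, and likewise for $\calB$); and the covariance of the observation vector with itself is the block matrix $\bC$, whose blocks are $\calL\calL'c(\bX^i,\bX^i)$, $\calL\calB'c(\bX^i,\bX^b)$, and $\calB\calB'c(\bX^b,\bX^b)$, read off directly from Theorem~\ref{thm:differential_u}.

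The second step is to apply the classical Gaussian conditioning identity: if $(Z, W)$ is jointly Gaussian with zero mean, $\Cov[W,W]=\bC$, $\Cov[Z,W]=\mathbf{c}(\bx)^T$, $\V[Z]=c(\bx,\bx)$, then conditional on $W=\mathbf{y}$ the variable $Z$ is Gaussian with mean $\mathbf{c}(\bx)^T\bC^{-1}\mathbf{y}$ and variance $c(\bx,\bx)-\mathbf{c}(\bx)^T\bC^{-1}\mathbf{c}(\bx)$. Observe that the conditioning event here is precisely \qref{obs_L} and \qref{obs_B}, i.e.\ $W=\mathbf{y}$ with $\mathbf{y}$ the stacked source and boundary data. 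This yields \qref{posterior_mean} for a single point. To obtain the full posterior covariance \qref{posterior_covariance}, I would repeat the computation with two test points $\bx$ and $\bx'$ jointly, so that $Z=(u(\bx),u(\bx'))$ and the off-diagonal entry of the conditioned covariance matrix gives $c(\bx,\bx')-\mathbf{c}(\bx)^T\bC^{-1}\mathbf{c}(\bx')$. Since this holds for every finite set of test points, Kolmogorov's extension theorem (as already invoked in the proof of Theorem~\ref{thm:differential_u}) guarantees that the posterior is itself a GP with the stated mean and covariance functions.

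The only genuinely delicate point is the invertibility of $\bC$: the conditioning formula in the form above requires $\bC$ to be nonsingular, which holds provided the observation functionals $\{\calL[u](\bx^i_j)\}\cup\{\calB[u](\bx^b_j)\}$ are linearly independent as elements of the Gaussian Hilbert space generated by $u$ — equivalently, that the chosen collocation points are not degenerate for the operators $\calL,\calB$ and the kernel $c$. I would state this as a standing assumption (or replace $\bC^{-1}$ by a pseudo-inverse and restrict to the case of distinct, generic points), and otherwise the argument is a direct bookkeeping exercise: match each block of $\bC$ and each entry of $\mathbf{c}(\bx)$ against the corresponding covariance from Theorem~\ref{thm:differential_u}, then quote the Gaussian conditioning lemma. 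No other step presents an obstacle.
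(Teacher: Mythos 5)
Your argument is correct and follows essentially the same route as the paper's own proof: invoke Theorem~\ref{thm:differential_u} to get joint Gaussianity of $(u,\calL[u],\calB[u])$ at the test and collocation points, read off the covariance blocks, and apply the standard Gaussian conditioning formula for every finite set of test points. Your explicit remark on the invertibility of $\bC$ (or using a pseudo-inverse) is a reasonable extra precaution that the paper leaves implicit, but it does not change the substance of the argument.
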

\begin{proof}
	According to Theorem~\ref{thm:differential_u}, the vector random process
	$(u, \calL[u], \calB[u])$ is Gaussian.
	Thus, for any set of $n^t$ test points, $\bX^t=\{\bx_1^t,\dots,\bx_{n^t}^t\}$
	the joint distribution of the random vector $\left(u(\bX^t), \calL[u](\bX^i), \calB[u](\bX^b)\right)$
	is Gaussian.
	The result follows by conditioning this joint distribution on the 
	observed values of $\calL[u](\bX^i)$ and $\calB[u](\bX^b)$,
	and then making use of the definition of a GP.
\end{proof}

\paragraph{Probabilistic solution of BVPs and quantification of the discretization error.}
The conditioned GP of Theorem~\ref{thm:u_conditioned} captures our state of
knowledge after enforcing the PDE and the boundary condition on a finite 
discretization of the spatial domain.
We call the posterior mean, $\tilde{m}$ is as in \qref{posterior_mean},
the \emph{probabilistic solution} to the BVP.
The pointwise uncertainty of this solution that corresponds to the
discretization error is neatly captured by the pointwise posterior variance:
\begin{equation}
\label{eqn:posterior_variance}
\sigma^2(\bx) = \tilde{c}(\bx, \bx),
\end{equation}
where $\tilde{c}$ is as in \qref{posterior_covariance}.

\paragraph{Time-dependent PDEs.}
Generalization of our methodology to time-dependent PDEs is
straightfoward. All that one needs to do is augment $\bx$ with $t$, replace
$\Omega$ with $\Omega\times [0,+\infty]$, and make the initial condition 
part of the boundary condition.
However, this generalization introduces computational challenges that are
beyond the scope of this work.
Thus, we do not consider time-dependent PDEs in this paper.

\paragraph{Connections to the finite element method.}
A natural question that arises is ``What combination of covariance function and observed data gives rise to established numerical methods for the solution of PDEs such as finite difference, finite volume, and finite element methods \cite{ames1977}?"
Establishing such a connection would automatically provide  probabilistic errors bars for existing numerical codes.
Even though a complete survey of this topic is beyond the scope of the paper, we feel obliged to report an important theoretical observation that reveals a connection between the finite element method (FEM) and PN.
The observation that we have made is that the mean of the posterior GP becomes exactly the same as the FEM solution to the PDE if 1) the covariance function $c$ is picked to be the Green's function of the operator $\calL$~\cite{selcuk2006}; and 3) the observations upon which we condition the prior measure are the integrals $\int_{\Omega}\calL[u](\bx)\phi_i(\bx)d\bx=\int_{\Omega}f(\bx)\phi_i(\bx)d\bx, i=1,\dots,n_e$, where $\{\phi_i,\i=1,\dots,n_e)\}$ is the finite element basis upon which the solution is expanded.
Unfortunately, in this case the posterior variance of the GP turns out to be infinite and consequently, not very useful for quantifying the epistemic uncertainties due to the discretization.

\section{Numerical Results}
\label{sec:results}

We use the ideas discussed above to solve 1D and 2D boundary value problems.
We show how our ideas can naturally be applied to arbitrary geometries
without the use of a mesh.
Throughout this study we use a stationary covariance function for $u$:
\begin{equation}
\label{eqn:covar}
c(\bx, \bx') = s^2\exp\left\{-\frac{1}{2}\sum_{r=1}^d\frac{\left(x_r - x_r'\right)^2}{\ell_r^2}\right\},
\end{equation}
where $s$ and $\ell_r,r=1,\dots,d$ are parameters that can be interpreted as
the signal strength and the lengthscale of spatial dimension $r$, respectively.
The choice of this covariance function corresponds to a priori knowledge that
$u$ is infinitely differentiable.
%To assess the pointwise quality of the probabilistic solution $\tilde{m}$
%we compute its L2 error against an exact solution $u_{\mbox{exact}}$:
%\begin{equation}
%\mbox{Relative L2 error of}\;\tilde{m} =
%\frac{\left(\int_{\Omega}\left(\tilde{m}(\bx)-u_{\mbox{exact}}(\bx)\right)^2d\bx\right)^{\frac{1}{2}}}
%{\left(\int_{\Omega}\left(u_{\mbox{exact}}(\bx)\right)^2d\bx\right)^{\frac{1}{2}}}.
%\end{equation}
The code has been implement in Python using Theano~\cite{2016arXiv160502688short} for the symbolic computation of all the required derivatives.

%\subsubsection*{Case Study 1: Two-point Boundary Value Problem}
%\label{sec:bndry_1}
%
%Consider the boundary value problem:
%\begin{equation}
%\frac{d^2u}{dx^2} = 12x^2,
%\label{eqn:bndry_1}
%\end{equation}
%for $x\in [0, 1]$ and boundary conditions:
%\begin{equation}
%u(0) = 0\;\mbox{and}\;u(1) = 1.
%\label{eqn:bndry_1_cnd}
%\end{equation}
%The exact solution is:
%\begin{equation}
%\label{eqn:bndry_1_exact}
%u(x) = x^4 - x.
%\end{equation}
%We assume that our prior knowledge about this problem is captured by
%setting $s=0.1$ and $\ell=0.1$ in \qref{covar}.
%In Figure~\ref{fig:bndry_1}~(a), we compare the probabilistic solution 
%using $n_i=6$ (green dashed-line) and $n_i=12$ (red dotted line) equidistant internal 
%discretization points to the exact solution. 
%The shaded areas correspond to
%95\% predictive intervals that capture the discretization error.
%Notice that for $n_i=6$ the predictive error bars.
%In Figure~\ref{fig:bndry_1}~(b), we demonstrate the convergence of the method as $n_i\rightarrow\infty$
%for various choices of the covariance lengthscale.
%
%\begin{figure}[htb]
%	\centering
%	\subfigure[]{
%		\includegraphics[width=0.45\textwidth]{./}
%	}
%	\subfigure[]{
%		\includegraphics[width=0.45\textwidth]{./}
%	}
%	\caption{Case study 1 (\qref{bndry_1} subject to \qref{bndry_1_cnd}).
%		Subfigure~(a) compares two probabilistic solutions to the exact solution.
%		Subfigure~(b) shows the evolution of the L2 error as $n_i\rightarrow\infty$.
%	}
%	\label{fig:bndry_1}
%\end{figure}

\subsubsection*{Case Study 1: 1D Steady State Heat Equation with Neumann Boundary Condition}
\label{sec:heat_1}

Consider the boundary value problem:
\begin{equation}
\label{eqn:heat_1}
-\frac{d}{dx}\left(a(x)\frac{du}{dx}\right) - \frac{1}{2}u = e^{-(x-2)^2} 
\end{equation}
with
\begin{equation}
\label{eqn:heat_1_cond}
a(x) = \frac{1}{2}\arctan\left(20(x - 1)\right) + 1,
\end{equation}
and boundary conditions:
\begin{equation}
\label{eqn:heat_1_bndry}
\frac{du(0)}{dx} = 0,\;\mbox{and}\;u(3) = 0.
\end{equation}
In this case, we assume that $s=2$ in \qref{covar}.
However, instead of assuming a value for $\ell$ we estimate by maximizing
the likelihood of the data \qref{obs_L} and \qref{obs_B}.
Despite the fact that this can be done with a fast algorithm, here we just
used an exhaustive search because of the simplicity of the problem.
In Figure~\ref{fig:heat_1}~(a), we compare the probabilistic solution 
using $n_i=20$ (green dashed-line), $n_i=40$ (red dotted line),
and $n_i=80$ (magenta dash-dotted line) equidistant internal 
discretization points to the exact solution (obtained by the finite difference method using $n=10,000$
discretization points). 
The shaded areas correspond to
95\% predictive intervals that capture the discretization error.
In Figure~\ref{fig:heat_1}~(b), we show the normalized likelihood
(likelihood divided by its maximum value) for each one of the cases considered.

\begin{figure}[htp]
	\centering
	\subfigure[]{
		\includegraphics[width=0.45\textwidth]{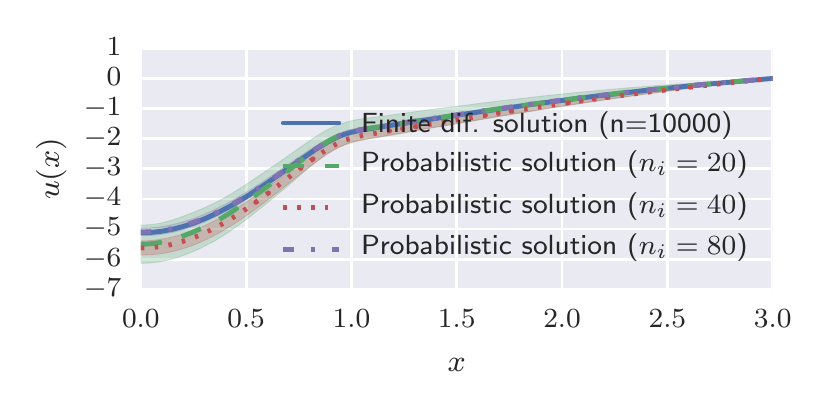}
	}
	\subfigure[]{
		\includegraphics[width=0.45\textwidth]{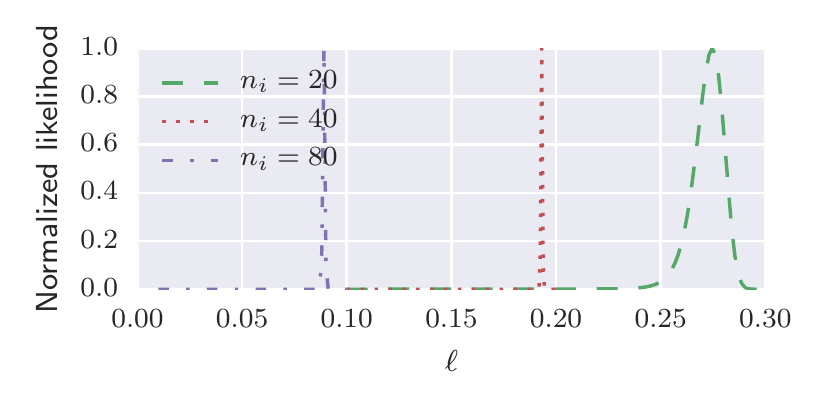}
	}
	\caption{Case study 2 (\qref{heat_1} subject to \qref{heat_1_bndry}).
		Subfigure~(a) compares three probabilistic solutions to the
		finite difference solution.
		Subfigure~(b) depicts the normalized likelihood as a function of
		the lengthscale $\ell$ of \qref{covar}.
	}
	\label{fig:heat_1}
\end{figure}

\subsubsection*{Case Study 2: Elliptic Equation on a Disk}
\label{sec:elliptic_disk}

Consider the elliptic partial differential equation on the unit disk,
\begin{equation}
\label{eqn:unit_disk}
\Omega = \left\{(x_1, x_2): x_1^2 + x_2^2 \le 1\right\},
\end{equation}
\begin{equation}
\label{eqn:elliptic_disk}
-\nabla^2u = 1,\;\mbox{for}\;(x_1,x_2)\in\Omega,
\end{equation}
with boundary conditions:
\begin{equation}
\label{eqn:elliptic_disk_bndry}
u(x_1, x_2) = 0,\;\mbox{for}\;(x_1,x_2)\in\partial\Omega.
\end{equation}
The exact solution is:
\begin{equation}
\label{eqn:elliptic_disk_exact}
u(x_1,x_2) = \frac{1 - x_1^2 - x_2^2}{4}.
\end{equation}
We use $n_i=16$ internal and $n_b=5$ boundary discretization points.
We assume that the $s=0.1$ and we choose $\ell$ so that the likelihood of
\qref{obs_L} and \qref{obs_B} is maximized ($\ell = 3.5$ for the chosen discretization).
Figure~\ref{fig:elliptic_disk}~(a) and~(b) depict contours of the exact and the
probabilistic solution, respectively.
Figure~\ref{fig:elliptic_disk}~(a) and~(b) depict contours of the exact and the
probabilistic solution, respectively.
In subfigure~(b), the internal points are indicated by blue crosses and the
boundary points by red disks.
Subfigure~(c) depicts the exact absolute value of the discretization error
while~(d) plots the contours of predicted discretization error (two times
the square root of the posterior variance \qref{posterior_variance}).

\begin{figure}[tbh]
	\centering
	\subfigure[Exact solution.]{
		\includegraphics[width=0.48\textwidth]{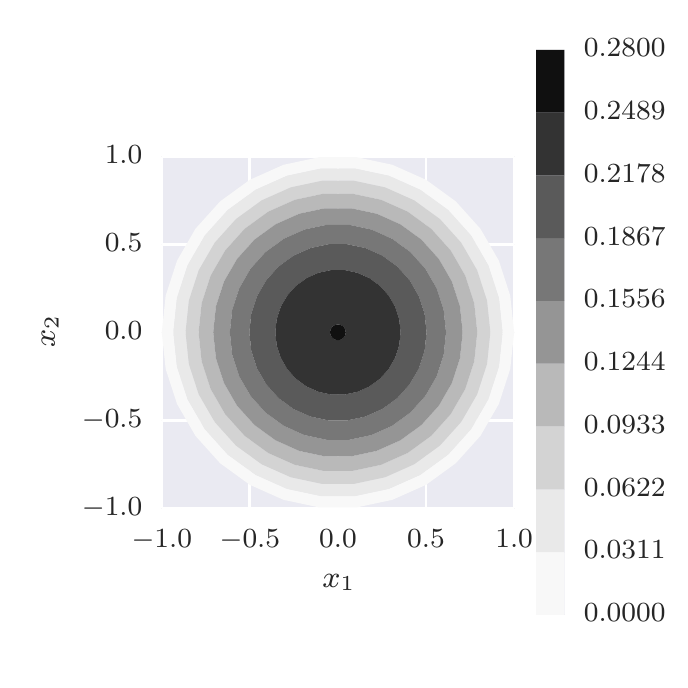}
	}
	\subfigure[Probabilsitic solution.]{
		\includegraphics[width=0.48\textwidth]{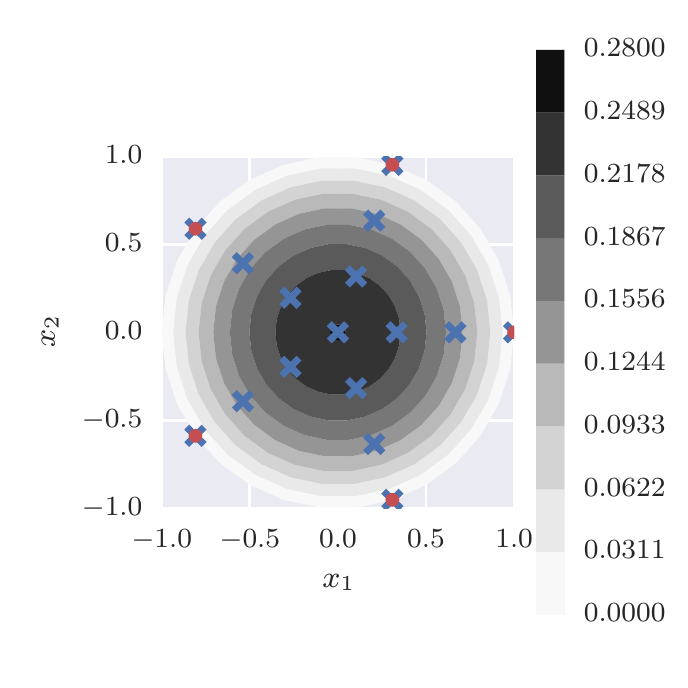}
	}
	\subfigure[Exact absolute error.]{
		\includegraphics[width=0.48\textwidth]{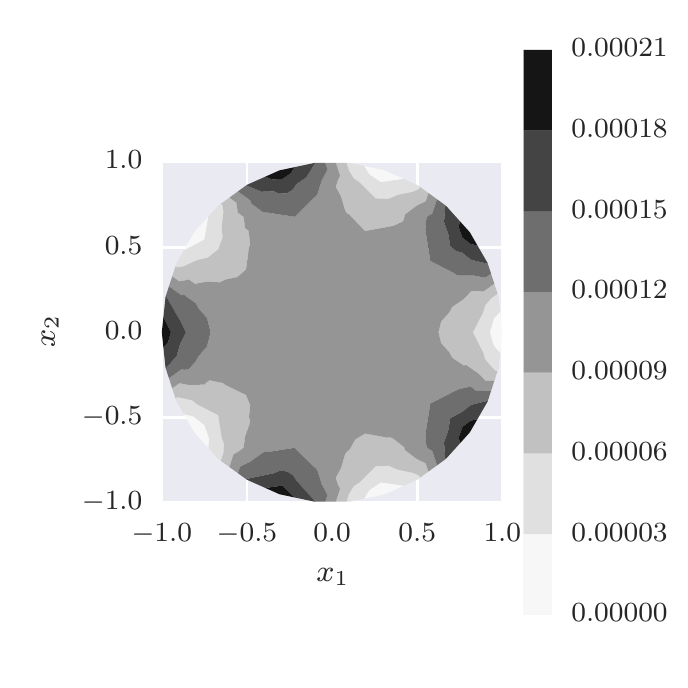}
	}
	\subfigure[Predicted discretization error.]{
		\includegraphics[width=0.48\textwidth]{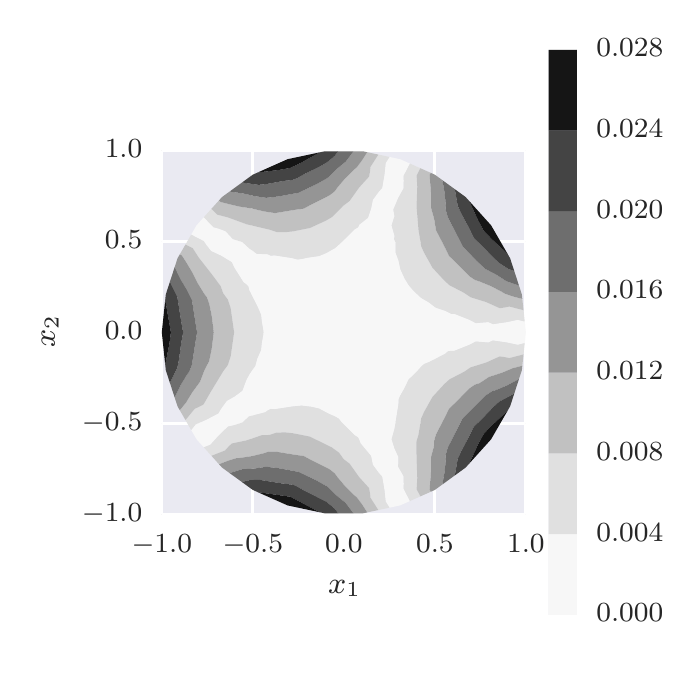}
	}
	\caption{Case study 4 (\qref{elliptic_disk} with \qref{elliptic_disk_bndry}).}
	\label{fig:elliptic_disk}
\end{figure}

\subsubsection*{Case Study 3: Elliptic Equation on a Disk with Unknown Solution}
\label{sec:elliptic_disk_unknown}

Consider the elliptic partial differential equation on the unit disk
($\Omega$ as in \qref{unit_disk}),
\begin{equation}
\label{eqn:elliptic_disk_unknown}
-\nabla^2u = 4\exp\left\{-\frac{1}{2}\left(\frac{Rx_1 - x_{01}}{\lambda}\right)^2
-\frac{1}{2}\left(\frac{Rx_2 - x_{02}}{\lambda}\right)^2
\right\},\;\mbox{for}\;(x_1,x_2)\in\Omega,
\end{equation}
with $R=0.3, \sigma=0.025, x_{01} = 0.6R\cos(0.2)$, and $x_{02}=0.6R\sin(0.2)$,
with boundary conditions:
\begin{equation}
\label{eqn:elliptic_disk_unknown_bndry}
u(x_1, x_2) = 0,\;\mbox{for}\;(x_1,x_2)\in\partial\Omega.
\end{equation}
We use $n_i=50$ internal and $n_b=20$ boundary discretization points.
We assume that the $s=0.01$ and we choose $\ell$ so that the likelihood of
\qref{obs_L} and \qref{obs_B} is maximized ($\ell = 0.26$ for the chosen discretization).
Figure~\ref{fig:elliptic_disk_unknown}~(a) and~(b) depict contours of the exact and the
probabilistic solution, respectively.
In subfigure~(b), the internal points are indicated by blue crosses and the
boundary points by red disks.

\begin{figure}[tbh]
	\centering
	\subfigure[Probabilistic solution.]{
		\includegraphics[width=0.48\textwidth]{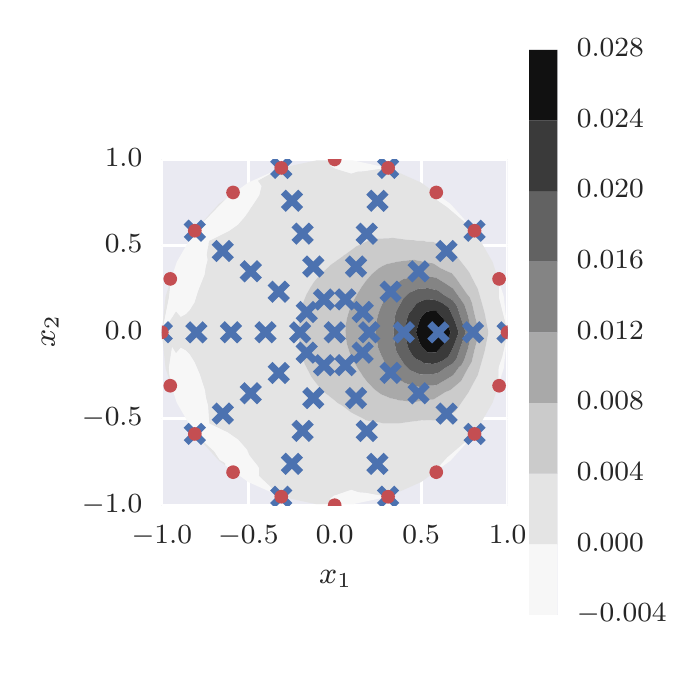}
	}
	\subfigure[Predicted discretization error.]{
		\includegraphics[width=0.48\textwidth]{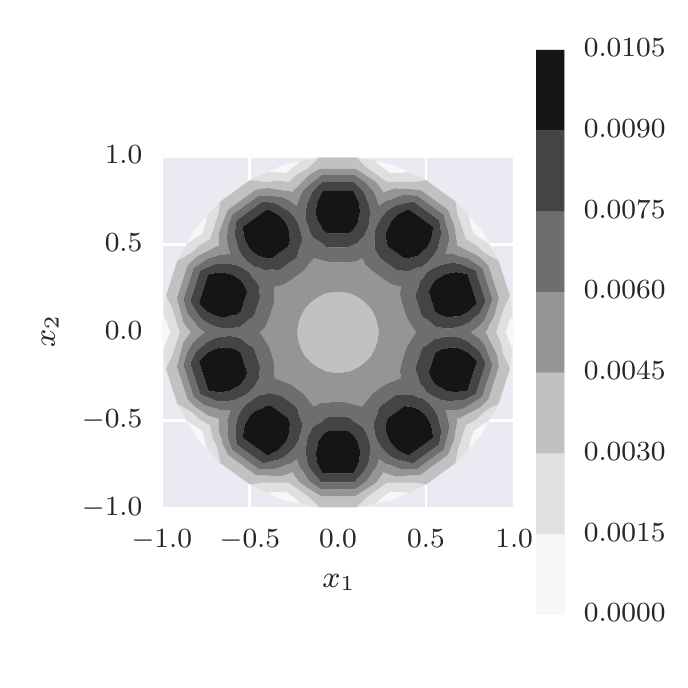}
	}
	\caption{Case study 5 (\qref{elliptic_disk_unknown} with \qref{elliptic_disk_unknown_bndry}).}
	\label{fig:elliptic_disk_unknown}
\end{figure}

\subsubsection*{Case Study 4: Elliptic Equation on Arbitrary Shape}
\label{sec:elliptic_arbitrary}

We consider an elliptic partial differential equation on
an arbitrary domain $\Omega$ (see Figure~\ref{fig:elliptic_arbitrary_shape}).
The equation is as in \qref{elliptic_disk_unknown}, but with
$R=0.8, \sigma=0.025, x_{01} = R\cos(\pi/4)$, and $x_{02}=R\sin(\pi/4)$.
We fix $n_b=20$ and $s=0.2$ and experiment with
$n_i=34, 43$, and $61$.
In each case we fit the $\ell$ by maximizing the likelihood of
\qref{obs_L} and \qref{obs_B}.

\begin{figure}[tbh]
	\centering
	\subfigure[Probabilistic solution ($n_i=34, \ell=0.18$).]{
		\includegraphics[width=0.48\textwidth]{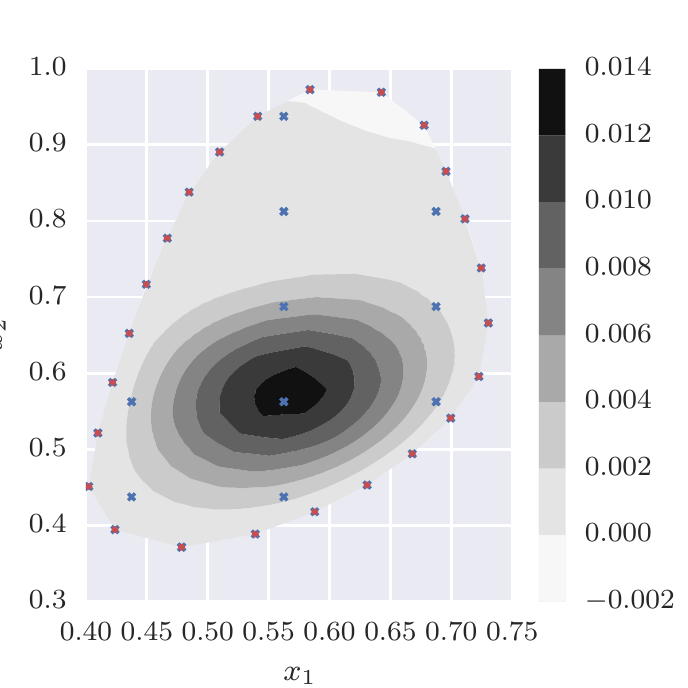}
	}
	\subfigure[Predicted discretization error ($n_i=34, \ell=0.18$).]{
		\includegraphics[width=0.48\textwidth]{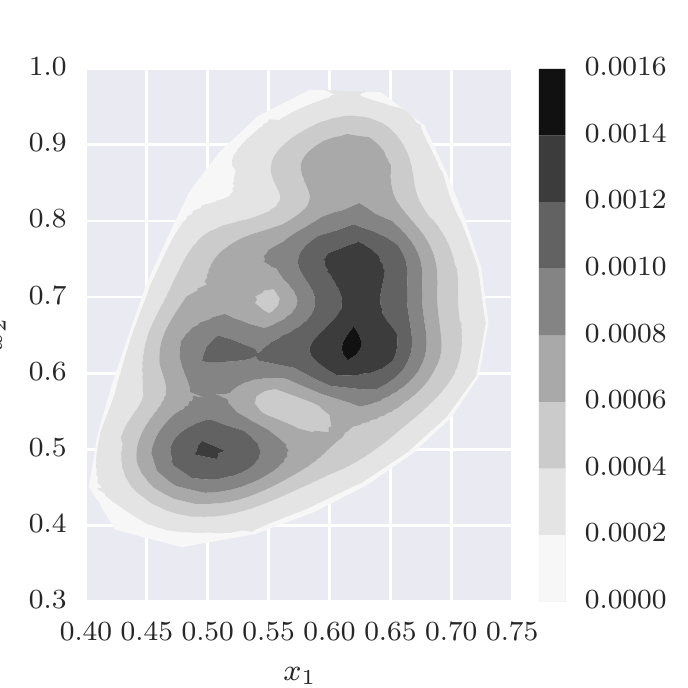}
	}
	\subfigure[Probabilistic solution ($n_i=43, \ell=0.14$).]{
		\includegraphics[width=0.48\textwidth]{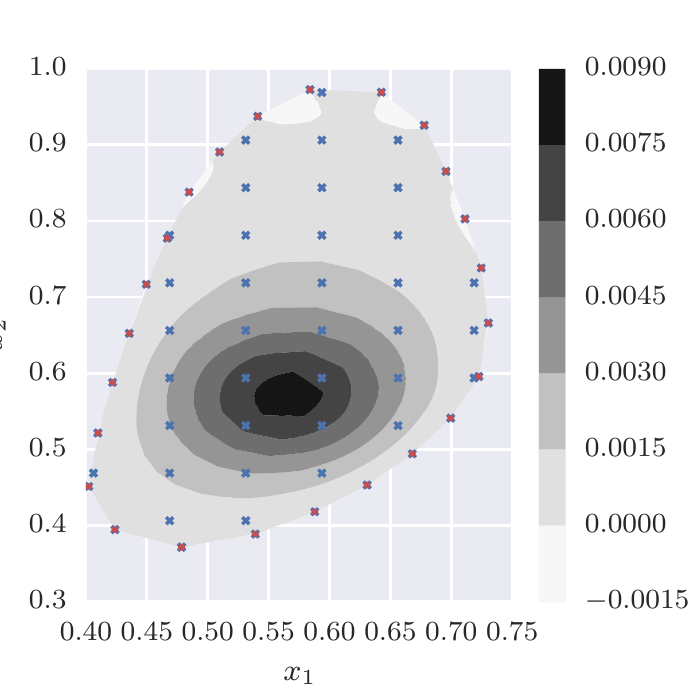}
	}
	\subfigure[Predicted discretization error ($n_i=43, \ell=0.14$).]{
		\includegraphics[width=0.48\textwidth]{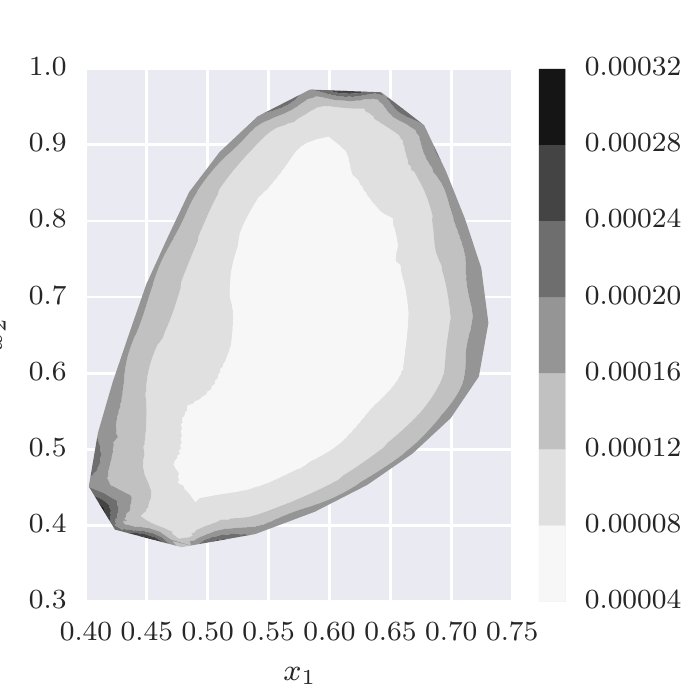}
	}
%	\subfigure[Probabilistic solution ($n_i=61, \ell=0.087$).]{
%		\includegraphics[width=0.48\textwidth]{}
%	}
%	\subfigure[Predicted discretization error ($n_i=61, \ell=0.087$).]{
%		\includegraphics[width=0.48\textwidth]{}
%	}
	\caption{Case study 4.}
	\label{fig:elliptic_arbitrary_shape}
\end{figure}

\section{Conclusions}
\label{sec:conclusions}
We presented a probabilistic method for solving linear PDEs with mixed linear boundary conditions on arbitrary geometries.
The idea is to assign a prior probability measure on the space of solutions, observe some discretized version of the PDE, and then derive the posterior probability measure.
Different choices of covariance functions and observed data result in different solution schemes.
The value of the method lies on the fact that it is capable of quantifying the epistemic uncertainties induced by the discretization scheme.
Such probabilistic schemes can find application in adaptive mesh refinement,  parallelized PDE solvers that are able to recover gracefully in case of sub-task failure, solution multi-scale/physics PDEs of which the right hand side depends on some expensive simulations that can only be performed a finite number of times, and more. 
The main drawback of the proposed methodology is that it requires the factorization of a dense covariance matrix.
This task can be prohibitively expensive when solving PDEs on realistic domains.
Further research is required to discover covariance functions that lead to sparse matrices that can be factorized efficiently.

\clearpage
\pagebreak

\bibliography{bibliography}

\begin{thebibliography}{10}

\bibitem{Diaconis:1988ip}
Persi Diaconis.
\newblock {Bayesian Numerical Analysis}.
\newblock In {\em Statistical Decision Theory and Related Topics IV}, pages
  163--175. Springer New York, New York, NY, 1988.

\bibitem{Hennig:2015jf}
Philipp Hennig, Michael~A Osborne, and Mark Girolami.
\newblock {Probabilistic Numerics and Uncertainty in Computations}.
\newblock {\em arXiv.org}, (2179):20150142, June 2015.

\bibitem{ohagan1991}
A.~O'Hagan.
\newblock {B}ayes-{H}ermite quadrature.
\newblock {\em Journal of Statistical Planning and Inference}, 29(3):245--260,
  1991.

\bibitem{kennedy1998}
M.~Kennedy.
\newblock {B}ayesian quadrature with non-normal approximating functions.
\newblock {\em Statistics and Computing}, 8(4):365--375, 1998.

\bibitem{minka2000}
T.~P. Minka.
\newblock Deriving quadrature rules from {G}aussian processes.
\newblock Report, Department of Statistics, Carnegie Mellon University, 2000.

\bibitem{haylock1996}
R.~Haylock and A.~O'Hagan.
\newblock {\em On inference for outputs of computationally expensive algorithms
  with uncertainty on the inputs}, pages 629--637.
\newblock Oxford University Press, Oxford, 1996.

\bibitem{oakley2002}
J.~Oakley and A.~O'Hagan.
\newblock {B}ayesian inference for the uncertainty distribution of computer
  model outputs.
\newblock {\em Biometrika}, 89(4):769--784, 2002.

\bibitem{becker2012}
W.~Becker, J.~E. Oakley, C.~Surace, P.~Gili, J.~Rowson, and K.~Worden.
\newblock {B}ayesian sensitivity analysis of a nonlinear finite element model.
\newblock {\em Mechanical Systems and Signal Processing}, 32:18--31, 2012.

\bibitem{daneshkhah2013}
A.~Daneshkhah and T.~Bedford.
\newblock Probabilistic sensitivity analysis of system availability using
  {G}aussian processes.
\newblock {\em Reliability Engineering \& System Safety}, 112:82--93, 2013.

\bibitem{hennig2015b}
P.~Hennig.
\newblock Probabilistic interpretation of linear solvers.
\newblock {\em {SIAM} Journal on Optimization}, 25(1), 2015.

\bibitem{skilling1992}
J.~Skilling.
\newblock {B}ayesian solution of ordinary differential equations.
\newblock In {\em Maximum Entropy and {B}ayesian Methods}, volume~50, pages
  23--37, 1992.

\bibitem{graepel2003}
T.~Graepel.
\newblock Solving noisy linear operator equations by {G}aussian processes:
  Application to ordinary and partial differential equations.
\newblock In {\em {ICML}}, pages 234--241, 2003.

\bibitem{calderhead2009}
B.~Calderhead, M.~Girolami, and N.~D. Lawrence.
\newblock Accelerating {B}ayesian inference over nonlinear differential
  equations with {G}aussian processes.
\newblock 2009.

\bibitem{chkrebtii2013}
O.~Chkrebtii, D.~A. Cambell, M.~A. Girolami, and B.~Calderhead.
\newblock {B}ayesian uncertainty quantification for differential equations.
\newblock {\em {ArXiv} PrePrint 1306.2365}, 2013.

\bibitem{barber2014}
D.~Barber.
\newblock On solving ordinary differential equations using {G}aussian
  processes.
\newblock {\em {ArXiv} Pre-Print 1408.3807}, 2014.

\bibitem{hennig2014}
P.~Hennig and S.~Hauberg.
\newblock Probabilistic solutions to differential equations and their
  application to {R}iemannian statistics.
\newblock In {\em 17th international Conference on Artificial Intelligence and
  Statistics}, 2014.

\bibitem{schober2014}
M.~Schober, D.~K. Duvenaud, and P.~Hennig.
\newblock Probabilistic {{ODE}} solvers with {R}unge-{K}utta means.
\newblock In {\em schober2014nips}, 2014.

\bibitem{conrad2015}
P.~R. Conrad, M.~Girolam, S.~S\"arkk\"a, A.~Stuart, and K.~Zygalakis.
\newblock Probability measures for numerical solutions of differential
  equations.
\newblock {\em {ArXiv}:1506.04592}, 2015.

\bibitem{hennig2013}
P.~Hennig.
\newblock Fast probabilistic optimization from noisy gradients.
\newblock In {\em International conference on machine learning}, 2013.

\bibitem{Owhadi:2015vo}
Houman Owhadi.
\newblock {Multigrid with rough coefficients and Multiresolution operator
  decomposition from Hierarchical Information Games}.
\newblock March 2015.

\bibitem{ames1977}
F.~W. Ames, R.~Rheinboldt, and A.~Jeffrey.
\newblock {\em Numerical Methods for Partial Differential Equations}.
\newblock 1977.

\bibitem{doi:10.1137/1.9780898718980}
R~Adler.
\newblock {\em {The Geometry of Random Fields}}.
\newblock Society for Industrial and Applied Mathematics, 2010.

\bibitem{selcuk2006}
S.~S. Bayin.
\newblock {\em Mathematical Methods in Science and Engineering}.
\newblock John Wiley \& Sons, Inc., 2006.

\bibitem{2016arXiv160502688short}
{Theano Development Team}.
\newblock {Theano: A {Python} framework for fast computation of mathematical
  expressions}.
\newblock {\em arXiv e-prints}, abs/1605.02688, May 2016.

\end{thebibliography}
\bibliographystyle{unsrt}

\end{document}